\newtheorem{theorem}{Theorem}[section]
\newtheorem{corollary}[theorem]{Corollary}
\theoremstyle{definition}
\newtheorem{definition}[theorem]{Definition}
\newtheorem{example}[theorem]{Example}
\theoremstyle{remark}
\numberwithin{equation}{section}
\begin{document}

\setcounter{page}{1}

\title[Convolution Properties of Orlicz Spaces]{Convolution Properties of Orlicz Spaces on hypergroups}

\author[A.R. Bagheri Salec, Vishvesh Kumar and S.M. Tabatabaie]{Ali Reza Bagheri Salec, Vishvesh Kumar and Seyyed  Mohammad Tabatabaie}

\address{Department of Mathematics, University of Qom, Qom, Iran.}
\email{\textcolor[rgb]{0.00,0.00,0.84}{r-bagheri@qom.ac.ir}}

\address{Department of Mathematics: Analysis, Logic and Discrete Mathematics,
	Ghent University, Belgium}
\email{\textcolor[rgb]{0.00,0.00,0.84}{vishveshmishra@gmail.com}}

\address{Department of Mathematics, University of Qom, Qom, Iran.}
\email{\textcolor[rgb]{0.00,0.00,0.84}{sm.tabatabaie@qom.ac.ir}}

\address{
\newline
}
\dedicatory{Dedicated to Prof. Kenneth A. Ross on his 85th birthday }

\subjclass[2010]{46E30; 43A62; 43A15.}

\keywords{locally compact group, locally compact hypergroup, Orlicz space, convolution, Young function, weight function, compact operator.}
\begin{abstract}
In this paper, for a locally compact commutative hypergroup $K$ and for a pair $(\Phi_1, \Phi_2)$ of Young functions satisfying sequence condition, we give a necessary condition in terms of aperiodic elements of the center of $K,$  for the convolution $f\ast g$ to exist a.e., where $f$ and $g$ are arbitrary elements of Orlicz spaces $L^{\Phi_1}(K)$ and $L^{\Phi_2}(K)$, respectively. As an application, we present some equivalent conditions for compactness of a compactly generated locally compact abelian group. Moreover, we also characterize compact convolution operators from $L^1_w(K)$ into $L^\Phi_w(K)$ for a weight $w$ on a locally compact  hypergroup $K$. 
\end{abstract} \maketitle

\section{Introduction}
The well-known $L^p$-conjecture stated that if $1<p<\infty$ and $G$ is a locally compact group, then the  Lebesgue space $L^p(G)$ is a Banach algebra under the convolution product if and only if $G$ is compact. Saeki \cite{Saeki} settled this conjecture by  quite elementary means, much more elementary than some of the
proofs of earlier partial results \cite{Raja, Rajaze}. In \cite{abt}, the authors mentioned that for each $p>2$, if $f\ast g$ exists a.e. for all $f,g\in L^p(G)$, then $G$ is compact, and so automatically $f\ast g\in L^p(G)$; see also \cite{que}. In the setting of hypergroups, this result was studied in  \cite{tab1} under some conditions. 
H. Hudzik, A. Kami\'nska and J. Musielak in \cite[Theorem 2]{hud} presented some equivalent conditions for an Orlicz space $L^\Phi(G)$ to be a convolution Banach algebra:
\begin{theorem}\label{orl}
	If $G$ is a locally compact abelian group and $\Phi$ is a Young function satisfying $\Delta_2$-condition, then the following conditions are equivalent:
	\begin{enumerate}
		\item $L^\Phi(G)$ is a Banach algebra under convolution;
		\item $L^\Phi(G)\subseteq L^1(G)$;
		\item $\lim_{x\rightarrow 0^+}\frac{\Phi(x)}{x}>0$ or $G$ is compact.
	\end{enumerate}
\end{theorem}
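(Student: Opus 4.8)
The plan is to prove the equivalences by establishing the cyclic chain $(3)\Rightarrow(2)\Rightarrow(1)\Rightarrow(3)$. The first two implications are relatively soft, resting on an embedding estimate and on the Orlicz-space version of Young's convolution inequality; the real content sits in the last implication, which I would argue in contrapositive form and where I expect the genuine difficulty.

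For $(3)\Rightarrow(2)$ I would treat the two disjuncts separately. When $G$ is compact, Haar measure is finite and $L^\Phi(G)\subseteq L^1(G)$ holds automatically for every Young function. When instead $a:=\lim_{x\to0^+}\Phi(x)/x>0$, convexity together with $\Phi(0)=0$ makes $x\mapsto\Phi(x)/x$ nondecreasing, so $\Phi(x)\ge ax$ for all $x\ge0$; evaluating the modular at the Luxemburg norm (this is the point where the $\Delta_2$-condition is used, to keep the modular controlled by the norm) yields $\int_G|f|\le a^{-1}\|f\|_\Phi$, giving a continuous inclusion $L^\Phi(G)\subseteq L^1(G)$. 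For $(2)\Rightarrow(1)$, the closed graph theorem promotes this inclusion to a bounded one, $\|f\|_1\le C\|f\|_\Phi$, and combining it with the estimate $\|k\ast f\|_\Phi\le\|k\|_1\,\|f\|_\Phi$ (valid for $k\in L^1(G)$ and $f\in L^\Phi(G)$, a consequence of convexity of $\Phi$ and Minkowski's integral inequality) gives $\|f\ast g\|_\Phi\le C\|f\|_\Phi\|g\|_\Phi$; thus $L^\Phi(G)$ is a convolution Banach algebra after passing to an equivalent norm.

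The heart of the matter is $(1)\Rightarrow(3)$, which I would prove contrapositively: assuming $G$ noncompact and $\lim_{x\to0^+}\Phi(x)/x=0$, I will exhibit functions violating boundedness of convolution (which the closed graph theorem extracts from the algebra property as $\|f\ast g\|_\Phi\le C\|f\|_\Phi\|g\|_\Phi$). I would test this inequality on indicators, using the exact formula $\|\mathbf{1}_W\|_\Phi=1/\Phi^{-1}(1/\mu(W))$ for a symmetric compact set $W$, together with $(\mathbf{1}_W\ast\mathbf{1}_W)(y)=\mu\big(W\cap(y+W)\big)$, whose peak value near the identity is $\mu(W)$. The decisive step is to select symmetric compact sets $W_n$ with $m_n:=\mu(W_n)\to\infty$ for which $\mathbf{1}_{W_n}\ast\mathbf{1}_{W_n}\ge m_n/2$ on a set of measure $\gtrsim m_n$; a lower modular estimate on that set then forces $\|\mathbf{1}_{W_n}\ast\mathbf{1}_{W_n}\|_\Phi\gtrsim m_n/\Phi^{-1}(1/m_n)$, and hence
\[
\frac{\|\mathbf{1}_{W_n}\ast\mathbf{1}_{W_n}\|_\Phi}{\|\mathbf{1}_{W_n}\|_\Phi^{2}}\;\gtrsim\; m_n\,\Phi^{-1}(1/m_n)\;=\;\frac{s_n}{\Phi(s_n)}\;\longrightarrow\;\infty ,
\]
where $s_n:=\Phi^{-1}(1/m_n)\to0$ and the divergence is precisely the assumption $\Phi(s)/s\to0$. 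This contradicts the submultiplicative bound and completes the chain.

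The main obstacle is the construction of the sets $W_n$: one needs them nearly translation-invariant, in the sense that $\mathbf{1}_{W_n}\ast\mathbf{1}_{W_n}$ stays comparable to its peak on a portion of the group of measure $\gtrsim m_n$. Since locally compact abelian groups are amenable, I would obtain such sets from a F\o lner sequence, taking boxes $B\times[-n,n]^b\times C$ in the compactly generated model $\mathbb{R}^a\times\mathbb{Z}^b\times C$ and reducing the general case to a compactly generated open subgroup; the elliptic case (for instance $\bigoplus_{\mathbb{N}}\mathbb{Z}/2\mathbb{Z}$) is handled directly by taking $W_n$ to be compact open subgroups, for which the overlap is exact. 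Making this F\o lner selection uniform across the structure theory of $G$, and tracking the $\Delta_2$-constants in the modular estimates, is the part that requires care.
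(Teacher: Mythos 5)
This statement is not proved in the paper at all: Theorem \ref{orl} is quoted verbatim as background from Hudzik--Kami\'nska--Musielak \cite[Theorem 2]{hud}, so there is no in-paper proof to compare your attempt against. Judged on its own, your cyclic scheme $(3)\Rightarrow(2)\Rightarrow(1)\Rightarrow(3)$ is the standard and correct strategy. The two soft implications are fine, with one small misattribution: in $(3)\Rightarrow(2)$ the $\Delta_2$-condition is not what controls the modular by the norm --- the inclusion follows already from $\Phi(x)\ge ax$ together with the fact that the modular of $f/\|f\|_\Phi$ is at most $1$ (Fatou), no $\Delta_2$ needed. The quantitative core of $(1)\Rightarrow(3)$ is also right: with $s_n=\Phi^{-1}(1/m_n)$ the ratio $\|\chi_{W_n}\ast\chi_{W_n}\|_\Phi/\|\chi_{W_n}\|_\Phi^2\gtrsim m_n\Phi^{-1}(1/m_n)=s_n/\Phi(s_n)\to\infty$ is exactly the negation of $\lim_{x\to 0^+}\Phi(x)/x>0$ (and the degenerate case where $\Phi$ vanishes on an interval is even easier).

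The one place where your sketch is thinner than it should be is the construction of the $W_n$, and I would warn you that the F\o lner property as usually stated is not quite the right tool: a F\o lner set for a fixed compact $K$ gives $\mu(W\cap(y+W))\approx\mu(W)$ only for $y\in K$, whereas you need the overlap to be comparable to $\mu(W)$ for $y$ ranging over a set of measure $\gtrsim\mu(W)$. What actually suffices is symmetric compact sets of arbitrarily large measure with \emph{bounded doubling}, $\mu(W_n+W_n)\le C\mu(W_n)$: since $\int\chi_W\ast\chi_W\,d\mu=\mu(W)^2$, $\chi_W\ast\chi_W\le\mu(W)$, and the support lies in $W+W$, a Chebyshev-type averaging argument gives $\chi_W\ast\chi_W\ge\mu(W)/(2C)$ on a set of measure at least $\mu(W)/2$. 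Your boxes in $\mathbb{R}^a\times\mathbb{Z}^b\times C$ (doubling constant $2^{a+b}$) and compact open subgroups in the totally disconnected torsion case (doubling constant $1$, exact overlap) do deliver this, and the reduction of a general noncompact LCA group to these two cases via an open compactly generated subgroup is legitimate; but as written the appeal to ``a F\o lner sequence'' papers over the distinction. With that repaired, the argument is complete.
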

Recently, A. Osan\c{c}l{\i}ol and S. \"Oztop in \cite{ozt} studied the weighted Orlicz algebras on locally compact groups (see also \cite{tab3}). They proved that, even for a non-compact group $G$, if $L^\Phi_w(G)\subseteq L^1_w(G)$ for a weight $w$, then $L^\Phi_w(G)$ is a convolution Banach algebra. In \cite{ozt2}, these results were extended to the hypergroup case (see \cite{kum2} for unweighted case).
In \cite{tab1}, for a compactly generated abelian group $G,$ it is proved that if $\Phi$ is a Young function with $\Delta_2$-condition and satisfying a sequence condition, then $L^\Phi(G)$ is a convolution Banach algebra if and only if $f\ast g$ exists a. e. for all  $f,g\in L^\Phi(G)$.

A main motivation for writing this paper is the above background and the following result from \cite[Corollary 1.4]{que} about Lebesgue spaces on locally compact groups.
\begin{theorem}\label{111}
Let $G$ be a locally compact abelian group and $1<p,q<\infty$. Then, $L^p(G)\ast L^q(G)\subseteq L^p(G)$ if and only if  $G$ is compact.
\end{theorem}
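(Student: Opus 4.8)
I would prove the two implications separately; the implication ``$G$ compact $\Rightarrow L^p(G)\ast L^q(G)\subseteq L^p(G)$'' is routine, while the converse carries all the difficulty. For the easy direction, note that a compact group has finite Haar measure, so $L^q(G)\subseteq L^1(G)$ for every $q>1$, and Young's inequality $\|f\ast g\|_p\le\|f\|_p\,\|g\|_1$ then gives $L^p(G)\ast L^q(G)\subseteq L^p(G)\ast L^1(G)\subseteq L^p(G)$.

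For the converse I would argue contrapositively: assume $G$ is non-compact and produce $f\in L^p(G)$, $g\in L^q(G)$ with $f\ast g\notin L^p(G)$. The first step is to promote the inclusion to a norm estimate. For each fixed $g$ the map $f\mapsto f\ast g$ has closed graph (if $f_n\to f$ and $f_n\ast g\to\psi$ in $L^p(G)$, a subsequence converges a.e.\ to $f\ast g$, so $\psi=f\ast g$), hence is bounded; doing the same in the other slot and invoking the uniform boundedness principle yields a constant $C>0$ with $\|f\ast g\|_p\le C\,\|f\|_p\,\|g\|_q$. The second step is geometric: fix a symmetric relatively compact neighbourhood $V$ of the identity, put $f=g=\mathbf 1_V$ and $h:=f\ast g$ (continuous, nonnegative, strictly positive near the identity, with $\operatorname{supp}h\subseteq V\cdot V$). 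Since a non-compact group cannot be covered by finitely many translates of a compact set, I can choose $x_1,\dots,x_n$ and $y_1,\dots,y_m$ so that the translates of $\operatorname{supp}h$ by the products $x_iy_j$ are pairwise disjoint. Writing $F=\sum_i L_{x_i}f$ and $H=\sum_j L_{y_j}g$ (with $L_x$ denoting translation by $x$), disjointness of supports gives
\[
\|F\|_p=n^{1/p}\|f\|_p,\qquad \|H\|_q=m^{1/q}\|g\|_q,\qquad \|F\ast H\|_p=(nm)^{1/p}\|h\|_p .
\]
Feeding these into $\|F\ast H\|_p\le C\|F\|_p\|H\|_q$ and cancelling reduces the estimate to $m^{1/p-1/q}\le C'$, so letting $m\to\infty$ produces a contradiction exactly when $q>p$.

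To reach more exponents I would dualise. For fixed $g$ the operator $f\mapsto f\ast g$ is bounded on $L^p(G)$ with norm $\le C\|g\|_q$, and a direct computation shows its Banach-space adjoint on $L^{p'}(G)$ is $\phi\mapsto\tilde g\ast\phi$ with $\tilde g(x)=g(x^{-1})$; since $g\mapsto\tilde g$ is isometric on $L^q(G)$, the hypothesis also delivers the bounded inclusion $L^q(G)\ast L^{p'}(G)\subseteq L^{p'}(G)$. Running the same disjoint-translate estimate on this new inclusion gives a contradiction whenever $q>p'$. Together, the original inclusion and its adjoint already force $G$ to be compact for every pair with $q>\min(p,p')$.

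The hard part will be the residual range $q\le\min(p,p')$ (which forces $q\le 2$ and contains the diagonal $p=q\le 2$), where the soft mechanism is powerless: there the inequality $\sum_j b_j^{p}\le C''\bigl(\sum_j b_j^{q}\bigr)^{p/q}$ extracted from the construction holds automatically, so disjoint translates of a single bump can never violate the bound. Here one must instead build genuinely spread-out test functions. When $G$ has a closed subgroup isomorphic to $\mathbb{R}$ or $\mathbb{Z}$, explicit power functions supported on that subgroup, of the form $|t|^{-a}$ away from the origin with exponents tuned so that $f\in L^p(G)$ and $g\in L^q(G)$ while the convolution decays only like $|t|^{-(a+b-1)}$ and thus leaves $L^p(G)$, already suffice. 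For a general non-compact $G$ --- which may be an infinite torsion group admitting no such subgroup and no dilation structure --- I expect this to be the genuine obstacle, and I would fall back on Saeki's resolution of the $L^p$-conjecture: its construction of functions modelled on independent or lacunary systems yields, after a small modification placing the two factors in $L^p(G)$ and $L^q(G)$ respectively, a pair with $f\ast g\notin L^p(G)$, completing the contrapositive.
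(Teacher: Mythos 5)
First, a point of comparison: the paper does not prove this statement at all --- it is quoted from Quek and Yap \cite{que} as motivation, and the paper's own machinery (Theorem \ref{main} with the sequence condition, via disjoint translates along powers of an aperiodic element) only recovers the special case $2<p,q<\infty$ for compactly generated groups (Corollary \ref{cor2}). So your attempt is being measured against the cited literature rather than an in-paper argument.

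Your easy direction, the closed-graph/uniform-boundedness upgrade to $\|f\ast g\|_p\le C\|f\|_p\|g\|_q$ (modulo the standard refinement: pass to a subsequence dominated by an $L^p$ majorant $h$ with $h\ast|g|<\infty$ a.e.\ so that dominated convergence applies), the disjoint-translate estimate ruling out $q>p$, and the adjoint trick ruling out $q>p'$ are all sound, and you correctly diagnose that single-bump disjoint translates are powerless when $q\le\min(p,p')$. The genuine gap is that you do not actually close this residual range. Your power-function construction works on $\mathbb{R}$ or $\mathbb{Z}$ (and can be transferred to $G$ by thickening with a small neighbourhood when such a closed subgroup exists --- equivalently, when $G$ has an element of infinite order/aperiodic element; this is exactly the constructive-interference mechanism of Theorem \ref{main}, where many pairs $(i,j)$ land on the same translate $a^{i+j}$ and the problem reduces to the failure of $\ell^p\ast\ell^q\subseteq\ell^p$ on $\mathbb{Z}$). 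But a non-compact LCA group can be an infinite torsion group (e.g.\ $\bigoplus_{n}\mathbb{Z}/2\mathbb{Z}$), in which case there is no copy of $\mathbb{Z}$ or $\mathbb{R}$ and no aperiodic element, and the arithmetic-progression interference is unavailable. For that case you write that Saeki's construction ``after a small modification'' placing the factors in $L^p$ and $L^q$ respectively will do; this is not a proof. Saeki's argument is built for the diagonal inclusion $L^p\ast L^p\subseteq L^p$, and adapting the quasi-independent-set construction to two different exponents with the convolution required to leave $L^p$ (rather than the Young exponent $r$ with $1/r=1/p+1/q-1$) is precisely the nontrivial content of the Quek--Yap paper that the theorem is drawn from. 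Until that adaptation is carried out, the case of non-compact torsion groups with $1<q\le\min(p,p')$ remains open in your argument.
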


In Section 3, we  give a version of this result for Orlicz spaces on locally compact hypergroups.  Indeed,  for a pair  $(\Phi_1,\Phi_2)$ of Young functions satisfying the sequence condition \eqref{seq} (see Definition \ref{defseq}), we show that if for each $f\in L^{\Phi_1}(K)$ and $g\in L^{\Phi_2}(K)$,  $f\ast g$ exists almost everywhere, then there is no aperiodic element in $Z(K)$ with respect to the action $Z(K)\curvearrowright (K,\lambda)$, where  $K$ is a locally compact commutative hypergroup equipped with an invariant measure $\lambda$ and $Z(K)$ is the center of $K$. As an application, among other results, we prove that a compactly generated abelian group $G$ is compact if and only if for each pair $(\Phi_1,\Phi_2)$ of Young functions satisfying the sequence condition \eqref{seq} and for each $f\in L^{\Phi_1}(G)$ and $g\in L^{\Phi_2}(G)$, $f\ast g$ exists a.e. We note that if we consider Lebesgue spaces $L^{p_1}(G)$ and $L^{p_2}(G)$, where $p_1,p_2>2$, the novel conclusion Corollary \ref{cor2} is obtained for Lebesgue spaces too, because the Young functions $\Phi_{p_i}, i=1,2$  defined by $\Phi_{p_i}(x):=|x|^{p_i}$ satisfy the sequence condition \eqref{seq}. 

In section 4,  we fix a function $g\in C_c(K)$, and study the compact convolution operator $f\mapsto f\ast g$ from $L^1_w(K)$ into $L^\Phi_w(K)$, where $K$ is a locally compact hypergroup and $w$ is a weight function on $K$. We show that this operator is compact if and only if the function $x\mapsto\frac{1}{w(x)}\|{\rm L}_xg\|_{\Phi,w}$ on $K$ vanishes at infinity. This conclusion is an Orlicz space version of the main result of  \cite{ghamed}. It is also a generalization of one result in \cite{tab3} on locally compact hypergroups. It is worth noting that the convolution operators in different Orlicz spaces were also studied  before by O'Neil \cite{Oneil} and Kami\'nska and Musielak \cite{KaminskaMusi}.

In the next section, we recall some definitions and notation concerning locally compact hypergroups and Orlicz spaces, and also state some facts about aperiodic elements of a group action on a measure space which are used in the proof of our main result.

\section{Preliminaries}
\subsection{Locally Comapct Hypergroups}
Locally compact hypergroups were introduced in the papers \cite{dun, jew, spe} in the 1970's. The main references for us on this structure are the paper \cite{jew} (in which hypergroups are called \emph{convos}) and the monograph \cite{blm}.
Let $K$ be a locally compact Hausdorff space. We denote the space of all bounded (complex-valued) Radon measures on $K$ by $\mathcal M(K)$ and all those in $\mathcal M(K)$ which are non-negetive by $\mathcal{M}^+(K)$. The support of each measure $\mu\in\mathcal M(K)$ is denoted by ${\rm supp}\mu$, and for each $x\in K$, $\delta_x$ denotes the Dirac measure at $x$. The space $K$ is called a \emph{locally compact hypergroup} (or simply a \emph{hypergroup}) if there are a \emph{convolution} $*:\mathcal M(K)\times \mathcal M(K)\rightarrow \mathcal M(K)$, an \emph{involution} $x\mapsto x^-$ on $K$, and an element $e$ (called the \emph{identity} element) such that the following conditions hold:
\begin{enumerate}
	\item $(\mathcal M(K), +, \ast)$ is a complex Banach algebra;
	\item for all nonnegative measures $ \mu,\nu\in \mathcal M(K)$, $\mu*\nu$ is also a nonnegative measure in $\mathcal M(K)$ and  the mapping
	$(\mu,\nu) \mapsto \mu * \nu $  from  $\mathcal M^+(K) \times \mathcal M^+ (K) $ to $\mathcal M^+ (K)$ is continuous,
	where $\mathcal M^+ (K)$ is equipped with the cone topology;
	\item for all $x,y \in K$,  $\delta_x * \delta_y$ is a probability measure with compact support;
	\item the mapping $(x,y) \mapsto\text{supp}(\delta_x * \delta_y)$ from $K \times K$ into the space $\mathfrak{C}(K)$
	of all non-empty compact subsets of $K$ is continuous, where $\mathfrak{C}(K)$ is equipped with Michael topology   whose subbasis is the family of all
	$\mathfrak{C}_{U,V}:=\{A \in \mathfrak{C} (K): A \cap U \neq \varnothing\text{ and } A \subseteq V\}$, where $U$  and  $V$ are open subsets of $K$;
	\item  for each $ x\in K$,     $\delta_e * \delta_x =\delta_x= \delta_x * \delta_e$;
	\item the mapping $x\mapsto x^-$ on $K$ is a homeomorphism, and for each $x,y \in K$ we have $ (x^{-})^{-}=x $ and  $ (\delta_x * \delta_y)^{-}=\delta_{y^{-}}*\delta_{x^{-}} $, where for each $\mu\in \mathcal M(K)$ and Borel set $E\subseteq K$, $\mu^-(E):=\mu\left(\{x^-:\,x\in E\}\right)$. Also,  $ e\in {\rm supp}(\delta_{x}*\delta_{y}) $ if and only if $ x=y^{-} $. 
\end{enumerate}

 A hypergroup $K$ is called \emph{commutative} if for each $x,y\in K$, $\delta_x\ast\delta_y=\delta_y\ast\delta_x$. Any locally compact group is a hypergroup. 
 See the book and papers \cite{blm, dun2, voi, KKA, KKAadd,KumarRamsey} for more examples including double coset spaces, polynomial hypergroups and orbit hypergroups and their applications.

Throughout, we assume that $(K,\ast,\cdot^-,e)$ is a locally compact hypergroup with a left-invariant measure, i.e. a non-negative Radon measure $\lambda$ on $K$ such that for each $x\in K$, $\delta_x\ast \lambda=\lambda$. It is known that any commutative hypergroup, compact hypergroup, discrete hypergroup and amenable hypergroup admits such a measure \cite{blm}. 

For each element $x\in K$ and  Borel subsets $E,F$ of $K$ we denote
$$
x\ast F:=\bigcup_{y\in F}{\rm supp}(\delta_x\ast\delta_y),\quad E\ast F:=\bigcup_{t\in E}\left(t\ast F\right).
$$

The \emph{convolution} of any two measurable functions $f,g:K\rightarrow\mathbb C$ is defined by
$$(f\ast g)(x):=\int_K f(y)g(y^-\ast x)\,d\lambda(y),\quad (x\in K),$$
when this integral exists, where
$$g(y^-\ast x):=\int_K g(t)\,d(\delta_{y^-}\ast\delta_x)(t).$$

If $\mu\in \mathcal M(K)$ and $f$ is a Borel measurable function on $K$, the convolution $\mu*f$ is defined by:
$$(\mu*f)(x) = \int_K f(y^-*x)\ d\mu(y),\quad(x\in K).$$
In particular, $(\delta_{z^-}*f)(x)=f_z(x)=({\rm L}_z f)(x)$ for $x, z \in K.$ 
\subsection{Group Action on a Measure space}
\begin{definition}\label{ac}
	Let $G$ be a locally compact group, $X$ be a locally compact Hausdorff space, and $\mu$ be a non-negative Radon measure on $X$. We say that a continuous function
	$$G\times X\longrightarrow X,\quad (s,x)\mapsto sx,\quad (s\in G, x\in X)$$
	is an \emph{action} of $G$ on the measure space $(X,\mu)$ (and write $G\curvearrowright (X,\mu)$) if
	\begin{enumerate}
		\item [{\rm(i)}] for each $x\in X$, $ex=x$, where $e$ is the identity element of $G$;
		\item [{\rm(ii)}]for each $s,t\in G$ and $x\in X$, $s(tx)=(st)x$;
		\item [{\rm(iii)}]the measure $\mu$ is $G$-invariant, i.e., for each $s\in G$ and any Borel subset $E$ of $X$, $sE:=\{sx:\,x\in E\}$ is also a Borel subset of $X$ and $\mu(sE)=\mu(E)$.
	\end{enumerate}
\end{definition}
Assume that $H$ is a closed subgroup of a locally compact group $G$ with modular functions $\Delta_H: H \rightarrow (0, \infty)$ and $\Delta_G:G \rightarrow (0,\infty)$. Denote the restriction of $\Delta_G$ on $H$ by $\Delta_G|_{H}.$ If $\Delta_H=\Delta_G|_{H}$  then there exists a $G$-invariant Radon measure $\mu$ on $G/H$, and $G$ naturally acts on the quotient space $(G/H,\mu)$. For more study on this topic we refer to the book \cite[Chapter IV]{ker}.
\begin{definition}
	An element  $a$ of a locally compact group $G$ is called \emph{compact} if the closed subgroup $G(a)$ generated by $a$ is compact.
\end{definition}

For some details about compact elements of $G$ see \cite{hew}. In the literature,  non-compact elements of a group $G$ are also called \emph{aperiodic elements} (for example see \cite{che, che1}). Trivially, in any discrete group, an element is aperiodic if and only if it has infinite order (i.e. it is not a torsion element). Recently, these elements have been used to study linear dynamical properties of weighted translation operators on locally compact groups; see \cite{chenn, che1}. It is known that any element, except the identity, of the non-discrete additive group $\mathbb R^n$, the Heisenberg group, and the affine group is aperiodic. By \cite[Lemma 2.1]{che}, if $G$ is a second countable group, an element $a\in G$ is aperiodic if and only if  for each compact subset $E$ of $G$, there is an integer $N>0$ such that for each $n\geq N$, $E\cap a^nE=\varnothing$. This equivalence leads one to give the following definition which recently has been used to present a sufficient and necessary condition for a weighted translation, generated by a group action, to be disjoint topologically transitive.
\begin{definition}\label{ap}
	 An element $a\in G$ is called \emph{aperiodic} with respect to a given action $G\curvearrowright (X,\mu)$ if for each compact subset $E\subseteq X$, there exists an integer  $N>0$ such that for each $n\geq N$, $E\cap a^n E=\varnothing$.
\end{definition}
Thanks to \cite[Lemma 2.1]{che}, the aperiodic elements of a second countable group $G$ are same as the aperiodic elements of $G$ with respect to the natural action of $G$ on itself. In this paper, we will apply this concept regarding the action of the center of a hypergroup on the whole of hypergroup.
 The \emph{center} $Z(K)$ of a commutative hypergroup $K$ is defined as the set of all  $x\in K$ such that for each $y\in K$, $\text{supp}(\delta_x\ast\delta_y)$ is a singleton. In other words, for each $x\in Z(K)$ and $y\in K$, there is an element $\alpha(x,y)\in K$ such that $\delta_x\ast\delta_y=\delta_{\alpha(x,y)}$. The center $Z(K)$ is the maximal subgroup of $K$, and naturally acts on $(K,\lambda)$ by the mapping $(x,y)\mapsto \alpha(x,y)$ \cite[Section 10.4]{jew}.  In the sequel, we denote $x\ast y:=\alpha(x,y)$ for all $x\in Z(K)$ and $y\in K$. Also, for each $x\in Z(K)$ and $n\in\mathbb N$ we put $x^n:=x\ast\ldots\ast x$ ($n$ times), and $x^{-n}:=(x^-)^n$. For more details and examples about center of hypergroups see \cite{ross}.  We use this action in the main result of the paper. One can easily see that for each element $x\in Z(K)$ and Borel subset $E$ of $K$, 
 \begin{equation}\label{inv}
 \lambda(x\ast E)=\lambda(E)
 \end{equation}
  while this equality does not hold for arbitrary elements of $K$; see \cite{jew}.
\subsection{Orlicz Spaces}
In this subsection, we recall some basic definitions and notation about Orlicz spaces; see the monographs \cite{rao,rao1} on this topic.
	A convex even mapping $\Phi:\mathbb{R}\rightarrow[0,\infty)$ is called a \emph{Young function} if $\Phi(0)
	=0$ and $\lim_{t\rightarrow\infty}\Phi(t)=\infty$.
	The \emph{complementary} of a Young function $\Phi$ is defined by
	$$\Psi(t):=\sup\{s|t|-\Phi(s):\,s\geq 0\},\quad(t\in\mathbb{R}).$$

In the sequel, $\Psi$ always denotes the complementary of a given Young function $\Phi$. 
The set of all Borel measurable functions $f:K\rightarrow\mathbb C$ such that for some $\alpha>0$,
$$\int_K\Phi(\alpha|f(x)|)\,d\lambda(x)<\infty,$$
is denoted by $L^\Phi(K)$. We assume that two elements of $L^\Phi(K)$ are the same if they are equal $\lambda$-a.e. For each $f\in L^\Phi(K)$ we define 
$$\|f\|_{\Phi}:=\sup\left\{\int_K |fv|\,d\lambda:\,\int_K\Psi(|v(x)|)\,d\lambda(x)\leq 1\right\}.$$

The pair $(L^\Phi(K),\|\cdot\|_\Phi)$ is called an \emph{Orlicz space}. Since $\lambda$ is a regular measure on $K$, by \cite[Chapter III, Proposition 11]{rao},  $(L^\Phi(K),\|\cdot\|_\Phi)$  is a Banach space.

 A Young function $\Phi$ is said to be
	\emph{$\Delta_2$-regular} (denoted by $\Phi\in \Delta_2$), if there exists a constant $C>0$  such that
	$\Phi(2t)\leq C\Phi(t)$ for all $t\geq 0,$ provided that $\lambda(K)=\infty$. If $\lambda(K)<\infty,$ we say $\Phi\in \Delta_2,$ if there exist two constants $C>0$ and $t_0 \geq 0$  such that
	$\Phi(2t)\leq C\Phi(t)$ for all $t\geq t_0.$  At times, we also say that $\Phi$ satisfies $\Delta_2$-condition if $\Phi \in \Delta_2.$
	Denote the space of all complex-valued compactly supported continuous functions on $K$ by $C_c(K).$ Then it is known that  
	for $\Phi\in \Delta_2,$ the space $C_c(K)$ is dense in $L^{\Phi}(K)$ (see \cite [Proposition 11, page 18]{rao1}).

Orlicz spaces are a more applicable generalization of Lebesgue spaces. In fact, for each $1<p<\infty$, the function $\Phi_p$ defined by  $\Phi_p(t):=|t|^p$ for all $t\in\mathbb{R}$, is a Young function and the Orlicz space $L^\Phi(K)$ is same as the usual Lebesgue space $L^p(K,\lambda)$. Orlicz spaces have been studied for several decades; see  \cite{kum2,KR,Kumar1,tab4, KT} for some interesting recent developments related to Orlicz spaces on locally compact hypergroups.

\section{Convolution of Two Orlicz Spaces}
In this section, we study the convolution properties of two different Orlicz spaces on locally compact hypergroups in terms of aperiodic elements of their centers. We will also derive interesting results for locally compact groups.
Before stating the main result of the section, we need to  introduce a class of Young functions.
\begin{definition} \label{defseq}
	Let $\Phi_1$ and $\Phi_2$ be  Young functions. We say that the pair $(\Phi_1,\Phi_2)$ satisfies the \emph{sequence condition} if there are two sequences $(\alpha_n)$ and $(\beta_n)$ of nonnegative numbers such that 
	\begin{equation}\label{seq}
	\sum_{n=1}^\infty\Phi_1(\alpha_n)<\infty,\quad\sum_{n=1}^\infty\Phi_2(\beta_n)<\infty\quad\text{and}\quad \sum_{n=1}^\infty\alpha_n\beta_n=\infty.
	\end{equation}
\end{definition}
\begin{example}			
			For each $p\geq 1$ and $\gamma\geq 0$ define the function $\Phi_{p,\gamma}$ by $\Phi_{p,\gamma}(x):=|x|^p\left(\ln(1+|x|)\right)^\gamma$ for all $x\in\mathbb{R}$. We denote the set of all $(p,\gamma)$ such that $p+\gamma>2$ and $\Phi_{p,\gamma}$ is  a Young function by $\Omega$. Then, for each $(p_1,\gamma_1),(p_2,\gamma_2)\in \Omega$, setting $\alpha_n=\beta_n:=\frac{1}{\sqrt{n}}$, one can see that $(\Phi_{p_1,\gamma_1},\Phi_{p_2,\gamma_2})$ satisfies the sequence condition (\ref{seq}).
\end{example}
Now, we are ready to present one of the main results of this paper.
\begin{theorem}\label{main}
Let $K$ be a locally compact commutative hypergroup\footnote{So $K$ has  an invariant measure $\lambda$.}. Suppose that $\Phi_1$ and $\Phi_2$ are two Young functions such that the pair $(\Phi_1,\Phi_2)$ satisfies the sequence condition \eqref{seq}. If,  for each $f\in L^{\Phi_1}(K)$ and $g\in L^{\Phi_2}(K)$,  $(f\ast g)(x)$ exists for almost every $x\in K$, then the set of aperiodic elements of $Z(K)$ with respect to the action $Z(K)\curvearrowright (K,\lambda)$ is empty.
\end{theorem}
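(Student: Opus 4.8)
The plan is to prove the contrapositive: assuming there exists an aperiodic element $a \in Z(K)$, I would construct functions $f \in L^{\Phi_1}(K)$ and $g \in L^{\Phi_2}(K)$ for which $(f \ast g)(x) = +\infty$ on a set of positive measure. The sequence condition \eqref{seq} is precisely what makes this construction possible: the two convergent sums $\sum \Phi_1(\alpha_n) < \infty$ and $\sum \Phi_2(\beta_n) < \infty$ guarantee that the functions I build will lie in the respective Orlicz spaces, while the divergent sum $\sum \alpha_n \beta_n = \infty$ will force the convolution to blow up.

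First I would exploit aperiodicity to extract a useful geometric configuration. Fix a compact neighborhood $V$ of $e$ with $\lambda(V) > 0$. By Definition \ref{ap} applied to the compact set $E = V$, there is an $N$ such that the translates $a^n \ast V$ for $n \geq N$ are pairwise disjoint (after possibly passing to a subsequence, one arranges $a^{n_k} \ast V$ to be mutually disjoint). Using the invariance \eqref{inv}, each translate satisfies $\lambda(a^n \ast V) = \lambda(V)$. This gives me a disjoint family of ``cells'' of equal positive measure, indexed by $n$, on which I can distribute the weights $\alpha_n$ and $\beta_n$.

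Next I would define, roughly, $f := \sum_n \alpha_n \mathbf{1}_{a^n \ast V}$ and $g := \sum_n \beta_n \mathbf{1}_{(a^n \ast V)^{-}}$ (or the analogous indicator combination adapted to how the convolution integrand $f(y)\,g(y^- \ast x)$ pairs the cells). Because the cells are disjoint and of measure $\lambda(V)$, one computes $\int_K \Phi_1(c|f|)\,d\lambda = \lambda(V)\sum_n \Phi_1(c\alpha_n)$, and the $\Delta_2$-type control together with convergence of $\sum \Phi_1(\alpha_n)$ places $f$ in $L^{\Phi_1}(K)$; symmetrically for $g$. The crucial point is that for $x$ in a fixed cell near $e$, the convolution $(f \ast g)(x) = \int_K f(y)\,g(y^- \ast x)\,d\lambda(y)$ picks up a contribution from \emph{each} index $n$ simultaneously: when $y$ ranges over $a^n \ast V$, the group-translation structure of $Z(K)$ (here $\delta_{y^-} \ast \delta_x$ is a point mass since $y \in Z(K)$) aligns $y^- \ast x$ into the matching cell for $g$, contributing a term comparable to $\alpha_n \beta_n \lambda(V)$. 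Summing over $n$ yields $(f \ast g)(x) \gtrsim \lambda(V) \sum_n \alpha_n \beta_n = \infty$ on a set of positive measure, contradicting the a.e.\ existence hypothesis.

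The main obstacle I anticipate is making the alignment in the last step rigorous on a hypergroup rather than a group. For a genuine group, $y^{-1}x$ is a single point and the bookkeeping is transparent; here I must stay inside $Z(K)$, where $\delta_{y^-} \ast \delta_x = \delta_{y^- \ast x}$ really is a Dirac mass, so the convolution integrand evaluates cleanly and the equal-measure property \eqref{inv} applies. The delicate part is choosing $V$ small enough and the indices sparse enough (via the aperiodicity bound of Definition \ref{ap}) that the cells $\{a^n \ast V\}$ are genuinely disjoint and that for a positive-measure set of $x$ every index contributes, so that the partial sums of $\sum \alpha_n \beta_n$ are all captured; handling the case where only a subsequence of translates is disjoint, and verifying that the divergence survives restriction to that subsequence, is where the careful estimation lives.
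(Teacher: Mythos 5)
Your overall architecture coincides with the paper's: assume an aperiodic $a\in Z(K)$, use Definition \ref{ap} to produce pairwise disjoint translated cells of equal measure via \eqref{inv}, load them with the weights $\alpha_n,\beta_n$ from \eqref{seq}, and force $(f\ast g)(x)=\infty$ on a set of positive measure. But the step you yourself flag as delicate contains a genuine error: you justify the evaluation $g(y^-\ast x)=\beta_n$ by saying that $\delta_{y^-}\ast\delta_x$ is a point mass ``since $y\in Z(K)$.'' That is false. Only $a$ lies in $Z(K)$; the integration variable $y$ ranges over $a^{n}\ast V$, where $V$ is a neighborhood of $e$ in $K$, and such points need not belong to $Z(K)$ (nor can you retreat to cells inside $Z(K)$, which may be $\lambda$-null). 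So $g(y^-\ast x)=\int g\,d(\delta_{y^-}\ast\delta_x)$ is an average over a genuinely spread-out probability measure, and if $g$ is the indicator of a cell of the \emph{same} size as $f$'s cell, part of the mass of $\delta_{y^-}\ast\delta_x$ can escape the support of $g$ and you only get an inequality in the wrong direction. The paper's fix is a thickening: it takes $V$ symmetric with $V\ast V\subseteq U$, supports $f$ on the cells $a^{-nN}\ast V$ and $g$ on the \emph{larger} cells $a^{nN}\ast V\ast V$, so that for $y\in a^{-nN}\ast V$ and $x\in V$ one has $\mathrm{supp}(\delta_{y^-}\ast\delta_x)\subseteq a^{nN}\ast V\ast V$ by commutativity and centrality of $a$; since $\delta_{y^-}\ast\delta_x$ is a probability measure and $g\equiv\beta_n$ on that whole set, the average is exactly $\beta_n$. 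Without this (or an equivalent device) your key lower bound $(f\ast g)(x)\gtrsim\lambda(V)\sum_n\alpha_n\beta_n$ is not established.

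A second, smaller but real issue: you propose to obtain disjointness ``after possibly passing to a subsequence'' and then hope that divergence of $\sum_n\alpha_n\beta_n$ ``survives restriction to that subsequence.'' It need not: with $\alpha_n=\beta_n=n^{-1/2}$ the full sum diverges while the sum over $n_k=2^k$ converges. The correct move, which the paper makes, is to re-index rather than to subsample the weights: use the sparse translates $a^{\pm nN}$ (whose pairwise disjointness for distinct $n$ follows from $U\cap a^{kN}\ast U=\varnothing$ and the group structure of $Z(K)$) but attach to the $n$-th such cell the \emph{consecutive} weights $\alpha_n,\beta_n$, so the full divergent series is retained. Finally, a minor point: no $\Delta_2$-type control is needed to place $f$ in $L^{\Phi_1}(K)$; one simply truncates the series at an index $N'$ large enough that $\lambda(V)\sum_{n\ge N'}\Phi_1(\alpha_n)<1$, which already bounds the modular at scale $1$.
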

\begin{proof}
 Suppose that the pair $(\Phi_1,\Phi_2)$ of Young functions satisfies the sequence condition, and $f\ast g$ exists a.e. for all $f\in L^{\Phi_1}(K)$ and $g\in L^{\Phi_2}(K)$. 
 Let, if possible, there exists an aperiodic element $a$ in $Z(K)$ with respect to the action $Z(K)\curvearrowright (K,\lambda)$. Fix a compact symmetric neighborhood $U$ of $e$ in $K$. Then, by Definition \ref{ap}, there exists an integer  $N>0$ such that for each $n\geq N$, 
 \begin{equation}\label{rel1}
 U\cap \left(a^n\ast U\right)=\varnothing.
 \end{equation}
 Note that \eqref{rel1} also implies that $U\cap \left(a^{-n}\ast U\right)=\varnothing.$
  
  Thanks to \cite[Lemma 3.2D]{jew}, there is a compact symmetric neighborhood $V$ of $e$ in $K$ such that $V\ast V\subseteq U$. So, by \eqref{rel1}, for each distinct $m,n\geq N$ we have 
  \begin{equation}\label{rel2}
  \left(a^{-mN}\ast V\right)\cap \left(a^{-nN}\ast V\right)= \varnothing \,\, \text{and}\,\,\left(a^{mN}\ast V\ast V\right)\cap\left(a^{nN}\ast V\ast V\right)=\varnothing.
  \end{equation}
  Indeed, to prove this, let us consider an element $t \in K$ which is in $a^{-mN}\ast V$ and $a^{-nN}\ast V$. Then, there exist $u,v\in V$ such that 
  $t\in \{a^{-mN}\}\ast\{u\}$ and $t\in \{a^{-nN}\}\ast\{v\}$. Now, as $a \in Z(K),$ we have
  \begin{equation*}
  u\in \{a^{mN}\}\ast \{t\}\subseteq \{a^{mN}\}\ast\{a^{-nN}\}\ast \{v\}\subseteq a^{(m-n)N}\ast U,
  \end{equation*}
  contradicting \eqref{rel1} as $u \in U$. Therefore, $\left(a^{-mN}\ast V\right)\cap \left(a^{-nN}\ast V\right)=\varnothing$. Similarly, one can see that 
  $\left(a^{mN}\ast V\ast V\right)\cap\left(a^{nN}\ast V\ast V\right)=\varnothing$.
  
   Since the pair $(\Phi_1,\Phi_2)$ satisfies the sequence condition, there are two sequences $(\alpha_n)$ and $(\beta_n)$ of nonnegative numbers such that the inequalities in \eqref{seq} hold. So,  there is an integer $N'>0$ such that 
$$\sum_{n=N'}^\infty\Phi_1(\alpha_n)<\frac{1}{\lambda(V)}\quad\text{and}\quad\sum_{n=N'}^\infty\Phi_2(\beta_n)<\frac{1}{\lambda(V\ast V)}.$$
 Define
$$f:=\sum_{n=N'}^\infty\alpha_n\chi_{a^{-nN}\ast V},$$
and
$$g:=\sum_{n=N'}^\infty\beta_n\chi_{a^{nN}\ast V\ast V},$$
 where $\chi_E$ denotes the characteristic function of $E\subseteq K$.
Hence, because of \eqref{rel2} and \eqref{inv} and applying the Monotone Convergence Theorem we have 
\begin{align*}
\int_K\Phi_1(f(x))\,d\lambda(x)&=\int_{\bigcup_{n=N'}^\infty a^{-nN}\ast V}\Phi_1(f(x))\,d\lambda(x)\\
&=\sum_{n=N'}^\infty\int_{a^{-nN}\ast V}\Phi_1(f(x))\,d\lambda(x)\\
&=\sum_{n=N'}^\infty\int_{a^{-nN}\ast V}\Phi_1(\alpha_n)\,d\lambda(x)\\
&=\lambda(V)\sum_{n=N'}^\infty\Phi_1(\alpha_n)<1,
\end{align*}
 where we have used $\Phi_1(0)=0$ in the first equality. 
 In particular, $f\in L^{\Phi_1}(K)$.
Similarly,
\begin{align*}
\int_K\Phi_2(g(x))\,d\lambda(x) 
&=\lambda(V\ast V)\sum_{n=N'}^\infty\Phi_2(\beta_n)<1,
\end{align*}
and this implies that $g\in L^{\Phi_2}(K)$. On the other hand, for each $x\in V,$ using the fact that  $\lambda(V)>0$ \cite[Theorem 1.3.12]{blm}, we have
\begin{align*}
(f\ast g)(x)&=\int_K f(y)g(y^-\ast x)\,d\lambda(y)\\
&=\sum_{n=N'}^\infty\alpha_n\int_{a^{-nN}\ast V}g(y^-\ast x)\,d\lambda(y)\\
&=\sum_{n=N'}^\infty\alpha_n\int_{a^{-nN}\ast V}\beta_n\,d\lambda(y)\\
&=\lambda(V)\sum_{n=N'}^\infty\alpha_n\beta_n=\infty,
\end{align*}
 contradicting the hypothesis $f*g$ exists a.e.
\end{proof}
If $K$ is a locally compact abelian group, then the action $Z(K)$ on $K$ is same as the natural action of $K$ on itself, because in this case we have $Z(K)=K$. So, the following result holds.
\begin{corollary}
	If a locally compact abelian group $G$ has an aperiodic element, then for each pair $(\Phi_1,\Phi_2)$ of Young functions satisfying the sequence condition \eqref{seq}, there are $f\in L^{\Phi_1}(G)$ and $g\in L^{\Phi_2}(G)$ such that $$\lambda\left(\{x\in G: (f\ast g)(x) \text{ does not exist}\}\right)>0.$$
\end{corollary}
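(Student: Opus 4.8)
The plan is to deduce the corollary directly from Theorem~\ref{main} by contraposition, exploiting the special structure of locally compact abelian groups. The key observation, already noted in the text immediately preceding the statement, is that for a locally compact group $K$ one has $Z(K)=K$, since every element $x$ satisfies $\mathrm{supp}(\delta_x\ast\delta_y)=\{xy\}$, a singleton. Hence the action $Z(K)\curvearrowright(K,\lambda)$ reduces to the natural action of $G$ on itself by left translation, and the notion of ``aperiodic element with respect to this action'' coincides with the ordinary notion of an aperiodic (i.e.\ non-compact) element of $G$, as guaranteed by \cite[Lemma 2.1]{che}.

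With this identification in hand, I would argue as follows. Suppose $G$ is a locally compact abelian group possessing an aperiodic element $a$. Since $Z(G)=G$, the element $a$ is then aperiodic with respect to the action $G\curvearrowright(G,\lambda)$ in the sense of Definition~\ref{ap}. Fix any pair $(\Phi_1,\Phi_2)$ of Young functions satisfying the sequence condition~\eqref{seq}. Now I invoke Theorem~\ref{main} in its contrapositive form: if it were the case that $(f\ast g)(x)$ exists for almost every $x$ for all $f\in L^{\Phi_1}(G)$ and $g\in L^{\Phi_2}(G)$, then the set of aperiodic elements of $Z(G)=G$ would be empty, contradicting the existence of $a$. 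Therefore there must exist some $f\in L^{\Phi_1}(G)$ and $g\in L^{\Phi_2}(G)$ for which $(f\ast g)(x)$ fails to exist on a set of positive $\lambda$-measure, which is exactly the asserted conclusion
\[
\lambda\bigl(\{x\in G: (f\ast g)(x)\text{ does not exist}\}\bigr)>0.
\]

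The argument is essentially a packaging of Theorem~\ref{main}, so there is no genuinely hard analytic step; the only point requiring care is the verification that the specific sequence condition~\eqref{seq} pair in question does witness the hypotheses of Theorem~\ref{main}, and that the abstract aperiodicity of Definition~\ref{ap} is correctly matched with the classical group-theoretic aperiodicity via the identification $Z(G)=G$ and the quoted equivalence from \cite[Lemma 2.1]{che}. One subtlety worth flagging is that Theorem~\ref{main} is stated for commutative hypergroups, and we are applying it to an abelian group, which is indeed a commutative hypergroup, so the hypotheses transfer without obstruction; in particular the invariant measure $\lambda$ is just Haar measure. The main ``obstacle'' is thus purely bookkeeping: ensuring the reader sees that ``aperiodic element of $G$'' and ``aperiodic element of $Z(K)$ with respect to $Z(K)\curvearrowright(K,\lambda)$'' denote the same objects in the group case, after which the corollary is immediate.
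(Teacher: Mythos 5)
Your proposal is correct and is essentially the paper's own argument: the paper derives this corollary in one line by noting that $Z(K)=K$ for a locally compact group, so the action $Z(K)\curvearrowright(K,\lambda)$ is the natural translation action and the contrapositive of Theorem~\ref{main} gives the claim immediately. The only caveat you might add is that the identification of classical aperiodicity (non-compact elements) with aperiodicity in the sense of Definition~\ref{ap} via \cite[Lemma 2.1]{che} is stated in the paper under a second countability hypothesis, but this does not change the substance of the argument.
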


\begin{corollary}\label{cor1}
Let $G$ be a compactly generated locally compact abelian group. Then, the following are equivalent:
\begin{enumerate}
	\item $G$ is compact.
	\item There is a pair $(\Phi_1,\Phi_2)$ of Young functions satisfying the sequence condition such that for each $f\in L^{\Phi_1}(G)$ and $g\in L^{\Phi_2}(G)$, $f\ast g$ exists a.e.
	\item For each pair $(\Phi_1,\Phi_2)$ of Young functions satisfying the sequence condition and for each $f\in L^{\Phi_1}(G)$ and $g\in L^{\Phi_2}(G)$, $f\ast g$ exists a.e.
\end{enumerate}
\end{corollary}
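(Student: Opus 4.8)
The plan is to prove Corollary \ref{cor1} by establishing the cyclic chain of implications $(1)\Rightarrow(3)\Rightarrow(2)\Rightarrow(1)$, where the only substantive new content is the final step and the rest is either classical or immediate.

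For $(1)\Rightarrow(3)$, I would argue that compactness of $G$ forces the containments $L^{\Phi_1}(G)\subseteq L^1(G)$ and $L^{\Phi_2}(G)\subseteq L^1(G)$: on a compact group $\lambda$ is finite, and a standard Orlicz-space estimate (Hölder's inequality pairing any $f\in L^{\Phi}$ with the constant function $1\in L^{\Psi}$, which lies in $L^\Psi$ because $\lambda(G)<\infty$) gives $\|f\|_1\le C\|f\|_\Phi$ for a constant $C$ depending only on $\lambda(G)$. Once both spaces embed into $L^1(G)$, the usual fact that $L^1(G)\ast L^1(G)\subseteq L^1(G)$ with the convolution integral absolutely convergent a.e.\ shows $f\ast g$ exists a.e.\ for every such pair, and this holds for \emph{every} admissible pair $(\Phi_1,\Phi_2)$, giving $(3)$.

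The implication $(3)\Rightarrow(2)$ is trivial, since any pair satisfying the sequence condition witnesses $(2)$ — one only needs to exhibit a single such pair, and the Example after Definition \ref{defseq} (e.g.\ taking $\Phi_1=\Phi_2=\Phi_{p,\gamma}$ with $p+\gamma>2$, or simply $\Phi_{p}(x)=|x|^p$ with $p>2$ and $\alpha_n=\beta_n=n^{-1/2}$) shows the class is nonempty. The heart of the corollary is $(2)\Rightarrow(1)$, which I would obtain by invoking Theorem \ref{main} and then converting its conclusion about aperiodic elements into compactness. By Theorem \ref{main}, hypothesis $(2)$ implies that $Z(K)$ — here $K=G$, so $Z(G)=G$ since $G$ is abelian and equals its own center — contains no aperiodic element with respect to the natural action of $G$ on itself. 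Thus \emph{every} element of $G$ is compact, i.e.\ generates a relatively compact subgroup.

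The main obstacle, and the only place requiring genuine structure theory, is the last step: deducing that a \emph{compactly generated} locally compact abelian group all of whose elements are compact must itself be compact. Here I would use the structure theorem for compactly generated locally compact abelian groups, which gives $G\cong \mathbb{R}^a\times\mathbb{Z}^b\times C$ with $C$ compact and $a,b\ge 0$ integers. If $a\ge 1$ then any nonzero vector in the $\mathbb{R}^a$-factor generates a noncompact (closed, isomorphic to $\mathbb{R}$) subgroup, hence is aperiodic; if $b\ge 1$ then a generator of a $\mathbb{Z}$-factor has infinite order and generates a discrete noncompact subgroup, again aperiodic. Since $(2)$ has ruled out aperiodic elements, we must have $a=b=0$, whence $G\cong C$ is compact. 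This is exactly where compact generation is essential — without it one could have, for instance, an infinite discrete torsion group in which every element is compact yet $G$ is noncompact — so the structure theorem is the linchpin that closes the equivalence.
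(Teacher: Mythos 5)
Your proof is correct, and the overall skeleton matches the paper's: the paper also declares that only $(2)\Rightarrow(1)$ needs proof and rests that implication on Theorem \ref{main}. Where you diverge is in how you convert ``no aperiodic elements'' into compactness. The paper cites \cite[9.26(b)]{hew}, which says that for a compactly generated abelian group the set of compact elements is a \emph{compact subgroup}; hence a noncompact $G$ must contain a non-compact (aperiodic) element, contradicting Theorem \ref{main}. You instead invoke the structure theorem $G\cong\mathbb{R}^a\times\mathbb{Z}^b\times C$ and exhibit an explicit aperiodic element whenever $a\geq 1$ or $b\geq 1$. Both are standard facts from the same circle of ideas; the paper's citation is a one-line reduction, while your version is heavier machinery but buys something the paper glosses over: it verifies aperiodicity directly in the sense of Definition \ref{ap} (translates of a compact set eventually escape every compact set), whereas the paper's identification of ``non-compact element'' with ``aperiodic with respect to the action'' formally relies on \cite[Lemma 2.1]{che}, which is stated for second countable groups. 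Your explicit treatment of $(1)\Rightarrow(3)$ via $L^{\Phi_i}(G)\subseteq L^1(G)$ on a group of finite measure, and your remark that $(3)\Rightarrow(2)$ needs the class of admissible pairs to be nonempty (supplied by the Example after Definition \ref{defseq}), fill in steps the paper leaves implicit; both are correct.
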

\begin{proof}
It is enough to prove $(2)\Rightarrow (1)$. Let $\Phi_1$ and $\Phi_2$ are two Young functions such that $(\Phi_1,\Phi_2)$ satisfy the sequence condition. Since $G$ is a  compactly generated abelian group, thanks to \cite[9.26(b)]{hew}, the set of compact elements of $G$ is a compact subgroup of $G$. So, if $G$ is not compact, it has an aperiodic element, and this contradicts Theorem \ref{main}.
\end{proof}
\begin{example}
	The additive discrete group $\mathbb Z$ is a non-compact finitely generated abelian group. So, by Corollary \ref{cor1}, for each pair $(\Phi_1,\Phi_2)$ of Young functions satisfying the sequence condition \eqref{seq}, there are $f\in l^{\Phi_1}(\mathbb Z)$ and $g\in l^{\Phi_2}(\mathbb Z)$ such that $(f\ast g)(n)=\infty$ for some $n \in  \mathbb Z$.
\end{example}

Compare the following conclusion with \cite[Theorem 1.1]{abt} and Theorem \ref{111} from T.S. Quek and L.Y.H. Yap.
\begin{corollary}\label{cor2}
	Let $G$ be a compactly generated locally compact abelian group and $2<p,q<\infty$. Then, $G$ is compact if and only if $f\ast g$ exists a.e. for all $f\in L^p(G)$ and $g\in L^q(G)$.
\end{corollary}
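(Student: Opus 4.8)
The plan is to reduce this statement directly to Corollary \ref{cor1} by identifying the Lebesgue spaces $L^p(G)$ and $L^q(G)$ with the Orlicz spaces determined by the power Young functions. Writing $\Phi_p(x):=|x|^p$ and $\Phi_q(x):=|x|^q$, each is a Young function for $p,q>1$, and the associated Orlicz space $L^{\Phi_p}(G)$ coincides (as a set, up to equivalence of norms) with the usual $L^p(G)$, and likewise $L^{\Phi_q}(G)=L^q(G)$. Thus the hypothesis and conclusion of the corollary can be rephrased entirely in Orlicz-space language, after which the substantive content is supplied by the results already established.

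For the forward implication I would argue that when $G$ is compact its Haar measure $\lambda$ is finite, so $L^p(G),L^q(G)\subseteq L^1(G)$; since the convolution of two $L^1$ functions on $G$ is defined almost everywhere (apply Fubini's theorem together with left-invariance to $\int_G\int_G|f(y)||g(y^{-1}x)|\,d\lambda(y)\,d\lambda(x)=\|f\|_1\|g\|_1<\infty$), it follows that $f\ast g$ exists a.e.\ for every $f\in L^p(G)$ and $g\in L^q(G)$. This direction requires no appeal to the sequence condition.

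For the converse, the key step — and the only place where the hypothesis $2<p,q<\infty$ is genuinely used — is to verify that the pair $(\Phi_p,\Phi_q)$ satisfies the sequence condition \eqref{seq}. I would take $\alpha_n=\beta_n:=n^{-1/2}$, so that $\Phi_p(\alpha_n)=n^{-p/2}$ and $\Phi_q(\beta_n)=n^{-q/2}$; then $\sum_n n^{-p/2}<\infty$ and $\sum_n n^{-q/2}<\infty$ precisely because $p/2>1$ and $q/2>1$, while $\sum_n\alpha_n\beta_n=\sum_n n^{-1}=\infty$. With the sequence condition in hand, the assumption that $f\ast g$ exists a.e.\ for all $f\in L^p(G)=L^{\Phi_p}(G)$ and $g\in L^q(G)=L^{\Phi_q}(G)$ is exactly condition (2) of Corollary \ref{cor1} for the pair $(\Phi_p,\Phi_q)$, and the implication $(2)\Rightarrow(1)$ of that corollary yields that $G$ is compact.

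I expect no serious obstacle: once the identification $L^p=L^{\Phi_p}$ is made, the argument is a direct citation of Corollary \ref{cor1}. The only point demanding care is the threshold $p,q>2$, which is precisely the borderline ensuring convergence of $\sum_n n^{-p/2}$ and $\sum_n n^{-q/2}$; for $p$ or $q$ equal to $2$ the harmonic-type series diverges and the construction underlying Theorem \ref{main} would break down, so the strict inequalities cannot be relaxed.
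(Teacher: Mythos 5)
Your proof is correct and follows exactly the route the paper intends: verify that the power Young functions $\Phi_p(x)=|x|^p$, $\Phi_q(x)=|x|^q$ with $p,q>2$ satisfy the sequence condition via $\alpha_n=\beta_n=n^{-1/2}$, then invoke the implication $(2)\Rightarrow(1)$ of Corollary \ref{cor1}, with the easy direction handled by finiteness of Haar measure on a compact group. (Your closing claim that the inequalities cannot be relaxed is tangential and slightly overstated — e.g.\ for $p=2<q$ one can still satisfy the sequence condition with $\alpha_n=n^{-1/2}(\log n)^{-1}$, $\beta_n=n^{-1/2}$ — but this does not affect the proof of the stated corollary.)
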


\section{Compact Convolution Operators}
In the sequel we assume that $K$ is a locally compact hypergroup equipped with a left-invariant measure $\lambda$, and $w$ is a weight on $K,$ that is, a positive continuous function on $K$ such that for each $x,y\in K$, $w(x\ast y)\leq w(x)\,w(y)$.
Here, $\mathcal{M}_w(K)$ denotes the set of all measures $\mu\in \mathcal{M}(K)$ with $w\mu\in \mathcal{M}(K)$. For each $\mu\in \mathcal{M}_w(K)$ we set $\|\mu\|_w:=\|w\mu\|$. In a similar way, we  can also define $L^1_w(K)$ and $L^\Phi_w(K),$ where $\Phi$ is a Young function. 

The goal of this section is to give some equivalent condition for a convolution operator from $L^1_w(K)$ into the weighted Orlicz space $L^\Phi_w(K)$ to be a compact operator. For this, we need the next theorem. 
\begin{theorem}\label{thm4}
	Let $K$ be a locally compact hypergroup and $g\in C_c(K)$. Assume that $\Phi$ is a Young function. Suppose that the bounded linear operators $T_g:L^1_w(K)\rightarrow L^\Phi_w(K)$ and $\tilde{T}_g:\mathcal{M}_w(K)\rightarrow L^\Phi_w(K)$ are defined by
	$$T_g(f):=f\ast g,\quad(f\in L^1_w(K))$$
	and 
	\begin{equation}\label{tild}
	\tilde{T}_g(\mu):=\mu\ast g,\quad(\mu\in \mathcal{M}_w(K)).
	\end{equation}
	Then, $T_g$ is compact if and only if $\tilde{T}_g$ is compact. 
\end{theorem}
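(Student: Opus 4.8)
The plan is to prove the equivalence by showing the two operators differ only by a bounded inclusion map that embeds $L^1_w(K)$ into $\mathcal{M}_w(K)$, and then use the fact that compactness is inherited in both directions under the appropriate factorizations. More precisely, I would first observe that there is a natural isometric embedding $\iota: L^1_w(K) \hookrightarrow \mathcal{M}_w(K)$ given by $\iota(f) = f\,d\lambda$, i.e. sending an $L^1_w$-function to the absolutely continuous measure it determines. Under this embedding one has $\tilde{T}_g \circ \iota = T_g$, since $(f\,d\lambda)\ast g = f\ast g$ by the definition of the measure convolution $\mu\ast g$ specialized to $d\mu = f\,d\lambda$. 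This immediately gives the easy direction: if $\tilde{T}_g$ is compact, then its restriction to the (isometrically embedded) subspace $L^1_w(K)$ is compact, because the composition of a compact operator with a bounded operator is compact. Hence $T_g = \tilde{T}_g\circ\iota$ is compact.

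For the converse, which I expect to be the genuine content of the theorem, the plan is to factor $\tilde{T}_g$ through $T_g$ using an approximate-identity argument. The key structural fact is that $L^1_w(K)$ is weak-$*$ dense (indeed norm-dense after convolving with an approximate identity) in $\mathcal{M}_w(K)$ in a way compatible with the convolution by the fixed compactly supported $g$. Concretely, I would fix a bounded approximate identity $(e_i)$ in $L^1_w(K)$ consisting of nonnegative functions with shrinking supports concentrating at $e$, so that for each $\mu\in\mathcal{M}_w(K)$ the net $\mu\ast e_i$ lies in $L^1_w(K)$ and $\tilde{T}_g(\mu\ast e_i) = (\mu\ast e_i)\ast g = \mu\ast(e_i\ast g) = T_g(\mu\ast e_i)$. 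Since $g\in C_c(K)$, convolution with $g$ has a strong smoothing/regularizing effect, and I would argue that $\mu\ast e_i\ast g \to \mu\ast g$ in $L^\Phi_w(K)$ uniformly over bounded sets of $\mu$; this uniform convergence lets one approximate $\tilde{T}_g$ in operator norm by operators that factor through $T_g$. As a uniform limit of compact operators is compact, and each approximating operator $\mu\mapsto T_g(\mu\ast e_i)$ is compact (being $T_g$ composed with the bounded map $\mu\mapsto\mu\ast e_i$ into $L^1_w(K)$), it would follow that $\tilde{T}_g$ is compact.

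The main obstacle will be establishing the \emph{uniform} approximation $\tilde{T}_g(\mu\ast e_i)\to\tilde{T}_g(\mu)$ over the unit ball of $\mathcal{M}_w(K)$, rather than merely pointwise in $\mu$. The compactness of $\mathrm{supp}(g)$ and the continuity of left translation $z\mapsto \mathrm{L}_z g$ as a map into $L^\Phi_w(K)$ (using that $C_c(K)$ is well-behaved in the Orlicz norm, via $\Delta_2$-type density results recalled in the preliminaries) should give equicontinuity of the family $\{x\mapsto g(y^-\ast x)\}$ needed to push the estimate through uniformly. I would therefore isolate as a lemma the statement that for $g\in C_c(K)$ the map $\mu\mapsto\mu\ast g$ sends bounded subsets of $\mathcal{M}_w(K)$ into an equicontinuous, uniformly supported family, and that $\|(\delta_e - e_i)\ast g\|_{\Phi,w}\to 0$; the required uniformity then follows by submultiplicativity of the weight together with the boundedness of the $e_i$ in $L^1_w(K)$. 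Once this regularization estimate is in hand, the rest of the argument is the routine ``approximation by compact operators'' bookkeeping described above.
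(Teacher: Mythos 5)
Your proposal is correct and follows essentially the same route as the paper: the easy direction is the restriction of $\tilde{T}_g$ along the isometric embedding $L^1_w(K)\hookrightarrow\mathcal{M}_w(K)$, and the hard direction uses a bounded approximate identity $(e_\alpha)$ of $L^1_w(K)$ with $e_\alpha\ast g\to g$ in $L^\Phi_w(K)$ for $g\in C_c(K)$. The uniformity over the unit ball that you flag as the main obstacle comes for free from the module inequality $\left\|\tilde{T}_g(\mu)-T_g(\mu\ast e_\alpha)\right\|_{\Phi,w}\leq \|\mu\|_w\,\|g-e_\alpha\ast g\|_{\Phi,w}$, which is exactly the paper's one-line estimate, so no equicontinuity lemma is needed (the paper in fact only uses the resulting containment of $\tilde{T}_g(\{\mu:\|\mu\|_w\leq 1\})$ in the closure of the image under $T_g$ of a bounded subset of $L^1_w(K)$).
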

\begin{proof}
	First suppose that  $T_g$ is a  compact operator. By \cite[Theorem 4.1]{ozt2}, there is a bounded left approximate identity $\{e_\alpha\}_{\alpha\in I}$ in $L^1_w(K)$ such that for each $h\in C_c(K)$, $e_\alpha\ast h\rightarrow h$ in $L^\Phi_w(K)$.
	For each $\mu\in \mathcal{M}_w(K)$ we have 
	$$\left\|\tilde{T}_g(\mu)-T_g(\mu\ast e_\alpha)\right\|_{\Phi,w}=\|\mu\ast g-\mu\ast(e_\alpha\ast g)\|_{\Phi,w}\leq \|\mu\|_w\,\| g-(e_\alpha\ast g)\|_{\Phi,w}.$$ Then, we have
	\begin{equation}\label{inc}
	\left\{\tilde{T}_g(\mu):\,\|\mu\|_w\leq 1\right\}\subseteq \overline{\left\{T_g(\mu\ast e_\alpha):\, \alpha\in I, \mu\in \mathcal{M}_w(K), \|\mu\|_w\leq 1\right\}}^{\|\cdot\|_{\Phi,w}}.
	\end{equation}
	 Now, from boundedness of the set 
	$$\{\mu\ast e_\alpha:\,\alpha\in I, \mu\in \mathcal{M}_w(K), \|\mu\|_w\leq 1\}$$
	in $L^1_w(K)$, one can see that $\left\{\tilde{T}_g(\mu):\,\|\mu\|_w\leq 1\right\}$ is compact, and so the proof of this direction is complete. Proof of the converse is easy.
\end{proof}
The following result is an Orlicz-space version of the main result of \cite[Theorem 2]{ghamed}.
\begin{theorem}\label{main2}
	Let $K$ be a locally compact hypergroup. Let $(\Phi,\Psi)$ be a pair of Young functions with $\Psi\in\Delta_2$. For  $g\in C_c(K)$, define the operator 
	$T_g:L^1_w(K)\rightarrow L^\Phi_w(K)$ by 
	$$T_g(f):=f\ast g,\quad (f\in L^1_w(K)).$$
	Then, $T_g$ is compact if and only if the function $F_g$ defined by
	\begin{equation}\label{fg}
	F_g:K\rightarrow \mathbb{R},\quad F_g(x):=\frac{1}{w(x)}\|{\rm L}_xg\|_{\Phi,w}
	\end{equation}
	for all $x\in K$, vanishes at infinity.
\end{theorem}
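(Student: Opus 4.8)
The plan is to reduce everything to the measure operator $\tilde{T}_g$ and to exploit the fact that the normalized point masses are exactly the objects whose images encode $F_g$. By Theorem \ref{thm4}, $T_g$ is compact if and only if $\tilde{T}_g:\mathcal{M}_w(K)\to L^\Phi_w(K)$ is compact. Since $\|\delta_{x^-}\|_w=w(x^-)$ and $\delta_{x^-}\ast g={\rm L}_xg$, the normalized measure $\mu_x:=\delta_{x^-}/w(x^-)$ has $\|\mu_x\|_w=1$ and $\tilde{T}_g(\mu_x)=\frac{1}{w(x^-)}{\rm L}_xg$, whose norm is comparable to $F_g(x)$ up to the weight bookkeeping $w(x)$ versus $w(x^-)$; I would control this factor using the continuity and submultiplicativity of $w$ and the fact that the involution is a homeomorphism, so that vanishing at infinity is preserved.

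For the forward implication I would argue by contradiction. If $F_g$ does not vanish at infinity, then (reducing to the $\sigma$-compact situation) there are $\varepsilon>0$ and a sequence $x_n\to\infty$ leaving every compact set with $F_g(x_n)\ge\varepsilon$. The images $\tilde{T}_g(\mu_{x_n})=\frac{1}{w(x_n^-)}{\rm L}_{x_n}g$ lie in the relatively compact set $\tilde{T}_g(\{\mu:\|\mu\|_w\le 1\})$. The geometric heart is that ${\rm supp}({\rm L}_{x_n}g)\subseteq x_n\ast{\rm supp}(g)$, and since ${\rm supp}(g)$ is compact while $x_n$ escapes every compact set, a greedy extraction yields a subsequence along which these supports are pairwise disjoint. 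Because $\|\cdot\|_{\Phi,w}$ is a lattice norm, functions with disjoint supports are $\varepsilon$-separated, so $\{\tilde{T}_g(\mu_{x_n})\}$ has no Cauchy subsequence, contradicting relative compactness. Hence $F_g$ vanishes at infinity.

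For the (harder) converse I would use the vector-valued representation coming directly from the convolution formula: for $f\in L^1_w(K)$ one has $(f\ast g)(x)=\int_K f(y)({\rm L}_yg)(x)\,d\lambda(y)$, so that $T_gf=\int_K f(y)\,{\rm L}_yg\,d\lambda(y)=\int_K (fw)(y)\,\Gamma(y)\,d\lambda(y)$ as a Bochner integral in $L^\Phi_w(K)$, where $\Gamma(y):=\frac{1}{w(y)}{\rm L}_yg$. The first task is to show $\Gamma\in C_0(K,L^\Phi_w(K))$: norm-continuity of $y\mapsto{\rm L}_yg$ follows from $g\in C_c(K)$, because the translates are uniformly bounded with locally uniformly bounded supports and converge uniformly, and boundedly supported bounded functions embed continuously into $L^\Phi_w(K)$ via $\|\chi_C\|_{\Phi,w}<\infty$ for compact $C$; meanwhile $\|\Gamma(y)\|_{\Phi,w}=F_g(y)\to 0$ at infinity by hypothesis. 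Writing $M_w:L^1_w(K)\to L^1(K,\lambda)$, $M_wf:=fw$, for the isometric isomorphism, I obtain $T_g=S\circ M_w$ with $S(h):=\int_K h\,\Gamma\,d\lambda$. Since $\|h\|_1\le 1$ gives $\|S(h)\|_{\Phi,w}\le\sup_y\|\Gamma(y)\|_{\Phi,w}$, I would approximate $\Gamma$ uniformly by $L^\Phi_w(K)$-valued simple functions $\Gamma_k=\sum_i\chi_{A_i}v_i$ (possible since $\Gamma$ is small off a compact set and uniformly continuous on it), producing finite-rank operators $S_k(h)=\sum_i\bigl(\int_{A_i}h\,d\lambda\bigr)v_i$ with $\|S-S_k\|\le\sup_y\|\Gamma(y)-\Gamma_k(y)\|_{\Phi,w}\to 0$. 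A uniform limit of finite-rank operators is compact, whence $S$, and therefore $T_g$, is compact.

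I expect the main obstacle to be the converse direction, and within it the rigorous verification that $T_gf$ is genuinely represented by the Bochner integral $\int_K(fw)\,\Gamma\,d\lambda$ (measurability and integrability of $y\mapsto\Gamma(y)$ on the hypergroup, and interchange with the pointwise convolution) together with the proof that $\Gamma\in C_0(K,L^\Phi_w(K))$. The hypothesis $\Psi\in\Delta_2$, which identifies $L^\Phi_w(K)$ with $(L^\Psi_w(K))^*$, is what underlies the norm and approximate-identity facts packaged in Theorem \ref{thm4} and the density properties invoked. The disjoint-support spreading in the forward direction and the $w(x)$-versus-$w(x^-)$ bookkeeping are secondary but still require the hypergroup translation structure to be handled with care.
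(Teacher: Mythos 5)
Your proof is correct in substance, but it takes a genuinely different route from the paper in both directions. For the forward implication the paper also reduces to $\tilde T_g$ via Theorem \ref{thm4} and also exploits the fact that ${\rm supp}({\rm L}_{x_F}g)$ eventually misses any fixed compact set, but it runs the argument through duality: it extracts a subnet of $\tilde T_g\bigl(\frac{1}{w(x_F)}\delta_{x_F}\bigr)$ converging to some $h$ with $\|h\|_{\Phi,w}\geq\varepsilon$, uses $\Psi\in\Delta_2$ to replace a norming functional by some $\psi\in C_c(K)$, and gets a contradiction because $\bigl\langle \tilde T_g(\tfrac{1}{w(x_{F_i})}\delta_{x_{F_i}}),\psi\bigr\rangle=0$ once $x_{F_i}$ leaves ${\rm supp}(\psi)\ast{\rm supp}(g)^-$. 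Your separation argument --- greedily choosing $x_{n+1}$ outside the compact set $\bigcup_{k\le n}x_k\ast{\rm supp}(g)\ast{\rm supp}(g)^-$ so that the translates have pairwise disjoint supports, and then using monotonicity of the Orlicz norm to conclude that the images are pairwise $\varepsilon$-separated --- is more elementary, avoids the subnet limit and the duality step, and does not need $\Psi\in\Delta_2$ in this direction. For the converse the paper writes $\tilde T_g$ as the adjoint of an operator $Q_3:L^\Psi_{w^{-1}}(K)\to C_0^w(K)$, invokes Schauder's theorem, and proves compactness of the conjugated operator $Q_g:L^\Psi(K)\to C_0(K)$ by an Arzel\`a--Ascoli/diagonal extraction over the exhaustion $G_n=\overline{\{|F_g|\ge 1/n\}}$; your Bochner-integral representation $T_gf=\int_K(fw)(y)\,\Gamma(y)\,d\lambda(y)$ with $\Gamma\in C_0\bigl(K,L^\Phi_w(K)\bigr)$, followed by uniform approximation of $\Gamma$ by simple functions, exhibits $T_g$ directly as a norm limit of finite-rank operators. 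This is arguably cleaner, since it sidesteps Schauder and the equicontinuity verification that the paper leaves implicit in its diagonal argument. The two caveats you flag are real but routine: the $w(x)$ versus $w(x^-)$ bookkeeping disappears once one fixes the convention $\delta_x\ast g={\rm L}_xg$ that the paper itself uses implicitly in \eqref{111}, and the identification of the pointwise convolution with the Bochner integral follows by pairing with $L^\Psi_{w^{-1}}(K)$ and Fubini, using $\|\delta_y\ast g\|_{\Phi,w}\le w(y)\|g\|_{\Phi,w}$ for integrability.
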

\begin{proof} Let, if possible, there be a $g \in C_c(K)$ such that $T_g$ is a compact operator but $F_g$ does not vanish at infinity.  Then, there is a number $\varepsilon>0$ such that for each compact set $F\subseteq K$, there exists an element $x_F\in K\setminus F$ such that
	\begin{equation}\label{111}
	\left\|\tilde T_g\left(\frac{1}{w(x_F)}\delta_{x_F}\right)\right\|_{\Phi,w}=\frac{1}{w(x_F)}\left\|L_{x_F}g\right\|_{\Phi,w}>\varepsilon,
	\end{equation}
	where $\tilde T_g$ is the operator defined by \eqref{tild}.
	By Theorem \ref{thm4}, the operator $\tilde T_g$ is also compact. Then, by boundedness of the set 
	$$\left\{\frac{1}{w(x_F)}\delta_{x_F}:\,F\subseteq K\text{ is compact}\right\}$$
	in $\mathcal{M}_w(K)$, there exists a subnet $\{x_{F_i}\}$ of $\{x_F\}$ and a function $h\in L^\Phi_w(K)$ such that
	\begin{equation}\label{lim}
	\lim_i \tilde{T}_g\left(\frac{1}{w(x_{F_i})}\delta_{x_{F_i}}\right)=h
	\end{equation}
	in $L^\Phi_w(K)$. By \eqref{111}, we have $\|h\|_{\Phi,w}\geq \epsilon$.
	So, since 
	$$\|h\|_{\Phi,w}=\sup\left\{|\langle h,f\rangle|:\, f\in L^\Psi_{w^{-1}}(K), \|f\|_{\Psi,w^{-1}}=1\right\},$$
	there is a function $\eta\in L^\Psi_{w^{-1}}(K)$ with $\|\eta\|_{\Psi,w^{-1}}=1$ such that
	$|\langle h,\eta\rangle|> \frac{\varepsilon}{2}$.
	\par 	Since $C_c(K)$ is dense in $L^\Psi_{w^{-1}}(K)$ (note that $\Psi\in\Delta_2$), there is a function $\psi\in C_c(K)$ such that  $\|\psi\|_{\Psi,w^{-1}}<\frac{3}{2}$ and 
	$$|\langle h,\psi\rangle|> \frac{\varepsilon}{2}.$$
	
	So, thanks to \eqref{lim}, there exists an index $i_0$ such that for each index $i$, if $F_{i_0}\subseteq F_i$, then
	\begin{equation}
	\left|\left\langle \tilde{T}_g\left(\frac{1}{w(x_{F_i})}\delta_{x_{F_i}}\right),\psi\right\rangle\right|> \frac{\varepsilon}{2}.
	\end{equation}
	
	
	Put $A_0:={\rm supp}(\psi)$ and $A_1:={\rm supp}(g)$. For some index $i$ we have 
	$$F_{i_0}\cup (A_0\ast A_1^-)\subseteq F_i,$$
	and so,
	$$\left\langle \tilde{T}_g\left(\frac{1}{w(x_{F_i})}\delta_{x_{F_i}}\right),\psi\right\rangle=\frac{1}{w(x_{F_i})}\,\int_{A_1}g(t)\psi(x_{F_i}t)\,dt=0,$$
	a contradiction.
	
	Conversely, let us assume that $0 \neq g \in C_c(K)$ and $F_g \in C_0(K),$ the space of all continuous functions on $K$ vanishing at infinity. Then mappings
	$$Q_1:L^\Psi(K) \rightarrow L^\Psi_{w^{-1}},\,\,\,Q_1(f)=fw,\,\,\,(f \in L^\Psi(K))$$
	and 
	$$Q_2:C_0^w(K) \rightarrow C_0(K),\,\,\,Q_2(f)=\frac{f}{w},\,\,\,(f \in C_0^w(K))$$ are isometric isomorphisms. Also, note that the operator $\tilde{T}_g$ is the adjoint of the operator
	$$Q_3:L^\Psi_{w^{-1}}(K) \rightarrow C_0^w(K),\,\,\,Q_3(f)=\langle g, L_{x^-}f \rangle,\,\,\,(f \in L^\Psi_{w^{-1}}).$$
	Now, as an application of Schauder's Theorem \cite[Chapter IV]{Convey} and having Theorem  \ref{thm4} in mind, it is enough to show that the operator
	$$Q_g:L^\Psi(K) \rightarrow C_0(K),\,\,\,Q_g=Q_2Q_3Q_1$$ is compact.
	
	To show this, let $\{f_n\}$ be a sequence $L^\Psi(K).$ For each $n \in \mathbb{N},$ let
	$$G_n:=\overline{\Big\{x\in K: |F_g(x)| \geq \frac{1}{n} \Big\}}.$$
	Then, for each $n,$ we have $G_n \subset G_{n+1}$ and since $F_g$ vanishes at infinite, $G_n$'s are compact subset of $K$. Also, for each $n \in \mathbb{N}$ and $x \in K \backslash G_n$
	\begin{align*}
	|Q_g(f_n)(x)|&=\frac{1}{w(x)} |\langle g, L_{x^-}(wf_n) \rangle|= \frac{1}{w(x)} |\langle wL_{x}g, f_n \rangle| \\&= \frac{2}{w(x)} \|wL_x g\|_{\Phi} \|f_n\|_{\Psi}=2 |F_g(x)| \|f_n\|_{\Psi} \leq \frac{2}{n} \sup_{m} \|f_m\|_{\Psi}. 
	\end{align*}
	Now, similar to the proof of second part of \cite[Theorem 3]{tab3} (also see \cite{ghamed}), by the diagonal method there is a subsequence of $\{Q_g(f_n)\}$ which converges in $C_0(K)$ and this completes the proof.   
	
\end{proof}

{\it Acknowledgments.}
The authors are grateful to the referee for making useful suggestions which have greatly improved the exposition.
The authors would like to thank  Prof. Kenneth A. Ross and Prof. Ajit Iqbal Singh for their suggestions and comments. 
Vishvesh Kumar is supported by FWO Odysseus 1 grant G.0H94.18N:
Analysis and Partial Differential Equations and by the Methusalem programme of the Ghent University Special Research Fund (BOF)
(Grant number 01M01021).

\bibliographystyle{amsplain}

\end{document}